\DeclareMathSymbol{\twoheadrightarrow} {\mathrel}{AMSa}{"10}
\def\Q{{\mathbf Q}}
                                         \def\NS{\mathrm{NS}}
\def\Z{{\mathbf Z}}
\def\C{{\mathbf C}}
\def\F{{\mathbf F}}
\def\Gal{\mathrm{Gal}}
\def\ord{\mathrm{ord}}
\def\Fr{\mathrm{Fr}}
\def\End{\mathrm{End}}
\def\Aut{\mathrm{Aut}}
\def\I{\mathrm{Id}}
        \def\K_a{\bar{K}}
\def\dim{\mathrm{dim}}
                  \def\Spec{\mathrm{Spec}}
\def\OC{{\mathcal O}}
\def\B{{\mathfrak B}}
\def\Pf{{\mathfrak P}}
\def\X{{\mathcal X}}
\def\ZZ{{\mathfrak Z}}
 \def\X{{\mathcal X}}
                          \def\VW{{\mathrm v}}
\newtheorem{thm}{Theorem}[section]
\newtheorem{lem}[thm]{Lemma}
\theoremstyle{definition}
\newtheorem{rem}[thm]{Remark}
\newtheorem{rems}[thm]{Remarks}
                 \newtheorem{sect}[thm]{}
\title[Ordinary reduction of K3 surfaces]{Ordinary reduction of K3 surfaces}
\author{Fedor A. Bogomolov}
\address{Courant Institute of Mathematical Sciences, New York University,
New York, NY 10012, USA}
 \email{bogomolo@cims.nyu.edu}
\author{Yuri G. Zarhin}
\address{Department of Mathematics, Pennsylvania
State University, University Park, PA 16802, USA}
\email{zarhin\char`\@math.psu.edu}
\begin{document}
\maketitle

Let $K$ be a number field and $A$ an abelian variety of positive dimension over
$K$. It is well known that $A$ has good reduction at all but finitely many
(non-archimedean) places of $K$. It is natural to ask whether among those
reductions there is ordinary one. In the most optimistic form the precise
question sounds as follows.

Is it true that there exists a finite algebraic field extension $L/K$ and a
density $1$ set $S$ of places of $L$ such that $A\times L$ has ordinary good
reduction at every place from $S$?

The positive answer is known for elliptic curves (Serre \cite{Serre1972}),
abelian surfaces (Ogus \cite{Ogus}) and  certain abelian fourfolds  and
threefolds \cite{NootO,Noot,Tankeev}.

One may ask a similar question for other classes of (smooth projective)
algebraic varieties. The aim of this note is to settle this question for K3
surfaces. Recall that an (absolutely) irreducible smooth projective surface $X$
over an algebraically closed field is called a K3 surface if the canonical
sheaf $\Omega_X^2$ is isomorphic to the structure sheaf $\OC_X$ and
$H^1(X,\OC_X)=\{0\}$.

Our main result is the following statement.

\begin{thm}
\label{main} Let $X$ be a $K3$ surface that is defined over a number field $K$.
Then there exists a finite algebraic field extension $L/K$ and a density $1$
set $\Sigma(L,X)$ of (non-archimedean) places of $L$ such that $X\times_K L$
has ordinary good reduction at every place  $\VW\in \Sigma(L,X)$.
\end{thm}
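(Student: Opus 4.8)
The plan is to prove the theorem by a Chebotarev argument, after one finite base change whose only purpose is to make a monodromy group connected; the only ingredient particular to $K3$ surfaces will be that the slope-$0$ part of $\H^2_{\mathrm{crys}}$ of a $K3$ surface over a finite field has length $0$ or $1$, length $1$ characterizing ordinariness. Concretely, fix a prime $\ell$ and let $\rho\colon\Gal(\bar K/K)\to\GL(\H^2(X_{\bar K},\Q_\ell))\times\Q_\ell^\times$ be $\sigma\mapsto(\sigma|_{\H^2},\chi_\ell(\sigma))$, with $\chi_\ell$ the cyclotomic character. Let $G$ be the Zariski closure of the image, $G^{\circ}$ its identity component, and choose a finite extension $L/K$ with $\rho(\Gal(\bar K/L))\subseteq G^{\circ}(\Q_\ell)$; the image of $\Gal(\bar K/L)$ is then a compact $\ell$-adic Lie group, Zariski dense in the connected group $G^{\circ}$. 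I take $\Sigma(L,X)$ to be the set of places of $L$, prime to $\ell$, of good reduction at which the reduction is an ordinary $K3$ surface. Since the excluded places are finite in number, it suffices to prove that the set of \emph{good} places of $L$ at which the reduction is \emph{not} ordinary has density $0$.

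For the local step, let $\VW$ be a place of good reduction of $X_L:=X\times_KL$, residue characteristic $p$ and residue field $\F_q$ with $q=N\VW$, and suppose the reduction $\X_\VW$ is not ordinary. By Mazur's inequality the Newton polygon of $\H^2_{\mathrm{crys}}(\X_\VW/W(\F_q))$ lies on or above the Hodge polygon $(0,1^{20},2)$; being in addition convex and, by Poincaré duality, symmetric about slope $1$, a direct check shows that its slope-$0$ part is empty unless $\X_\VW$ is ordinary. Hence non-ordinariness forces every eigenvalue of geometric Frobenius on $\H^2(\X_\VW,\Q_\ell)$ to have strictly positive $p$-adic valuation. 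Since $\H^1=\H^3=0$ for a $K3$ surface, the Lefschetz trace formula reads
\[
\tr(\mathrm{Frob}_\VW\mid\H^2)=\#\X_\VW(\F_q)-1-q^{2}\in\Z,
\]
and by the previous sentence this integer is divisible by $p$. Thus non-ordinariness at $\VW$ implies $p\mid\tr(\mathrm{Frob}_\VW\mid\H^2)$, i.e.\ $\#\X_\VW(\F_q)\equiv1\pmod p$.

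It remains to estimate the density of the set $S$ of good places $\VW$ with $p_\VW\mid\tr(\mathrm{Frob}_\VW\mid\H^2)$. Places of $L$ of residue degree $\ge2$ have density $0$ (there are only $O(\sqrt{x}/\log x)$ of norm $\le x$), so we may assume $N\VW=p_\VW$; then the Weil bound gives $|\tr(\mathrm{Frob}_\VW\mid\H^2)|\le22\,N\VW$, so $p_\VW\mid\tr(\mathrm{Frob}_\VW\mid\H^2)$ forces $\tr(\mathrm{Frob}_\VW\mid\H^2)=c\,N\VW=c\,\chi_\ell(\mathrm{Frob}_\VW)$ for some integer $c$ with $|c|\le22$. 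For each such $c$ the map $f_c\colon g\mapsto\tr(g\mid\H^2)-c\,\chi_\ell(g)$ is a regular function on $G^{\circ}$. If $f_c$ vanished identically, then evaluation at $1$ would give $c=22$, whence $\H^2(X_{\bar K},\Q_\ell)|_{\Gal(\bar K/L)}$ would have the same trace, hence the same semisimplification, as $\Q_\ell(-1)^{\oplus22}$ — impossible, since $\H^2(X,\OC_X)\neq0$ forces the Hodge--Tate decomposition of $\H^2(X_{\bar K},\Q_\ell)$ to involve more than one weight (comparison theorem applied to the good reduction of $X$), while that of $\Q_\ell(-1)^{\oplus22}$ does not. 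So $f_c\not\equiv0$ on the connected group $G^{\circ}$; its zero locus therefore has Haar measure $0$ in the compact group $\rho(\Gal(\bar K/L))$, and by the Chebotarev density theorem the set $\{\VW:\tr(\mathrm{Frob}_\VW\mid\H^2)=c\,\chi_\ell(\mathrm{Frob}_\VW)\}$ has density $0$. Summing over the finitely many $c$ shows that $S$, and hence the set of non-ordinary good places, has density $0$, which finishes the proof.

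The delicate point is the local dichotomy of the second paragraph: that a non-ordinary $K3$ surface over a finite field has \emph{all} its Frobenius slopes on $\H^2$ strictly positive. This rests on the rigidity of the Newton polygons that a $K3$ surface can have — ultimately on the formal Brauer group being one-dimensional, as in Artin and Mazur — and is false for surfaces in general; it is the only place where the $K3$ hypothesis is used in an essential way. Everything else is soft: Chebotarev equidistribution, the triviality that a nonzero regular (or analytic) function on a connected $\ell$-adic analytic group has a measure-zero zero set, and the Weil bound. (One could alternatively pass through the Kuga--Satake abelian variety and the crystalline comparison of Nygaard and Ogus, trading the $K3$-specific input for statements about abelian varieties, but this does not seem necessary.)
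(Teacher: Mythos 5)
Your argument is correct, and its local half is the same as the paper's (Lemma \ref{ordinary}: a K3 surface over $\F_q$ is ordinary iff $p\nmid \tr(\Fr\mid H^2)$ --- you derive the slope dichotomy from Mazur's theorem plus convexity and the functional equation rather than from the Artin--Mazur height table, but the content and the input \cite{BO} are the same); the global half, however, takes a genuinely different route. The paper fixes $\ell>44$, passes to an $L$ containing $\mu_\ell$ over which the image of Galois lies in $\I+\ell\cdot\End_{\Z_\ell}(H^2(\bar X,\Z_\ell))$, and uses the resulting congruences (Lemma \ref{ogus2}, the Katz--Ogus trick) to force the integer $c$ with $\tr(\Fr_\VW^{-1})=cp$ to equal $22$; semisimplicity of Frobenius then gives $\Fr_\VW=p\cdot\I$, so a non-ordinary degree-one place has $\Fr_\VW$ in the scalar subgroup $\ZZ$, which is not open by the Tate conjecture for K3 surfaces (rank of $\NS(\bar X)\le 20$ plus the hyperplane class), and Serre's Chebotarev for closed subgroups of smaller dimension finishes. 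You dispense with the congruence normalization entirely: any $\ell$ works, your base change only serves to make the monodromy group connected, and you must then treat all $45$ values $|c|\le 22$ at once, each cutting out a proper closed locus $\{f_c=0\}$ in $G^{\circ}$ --- proper for $c\ne 22$ by evaluation at the identity, and for $c=22$ by comparing Hodge--Tate weights at a place above $\ell$ --- to which you apply the measure-theoretic form of Serre's equidistribution. In effect you trade the paper's inputs (the congruence trick, semisimplicity of Frobenius, and the Tate conjecture over number fields) for Faltings' Hodge--Tate decomposition (to exclude $(H^2)^{\mathrm{ss}}\cong\Q_\ell(-1)^{\oplus 22}$) and the fact that a regular function nonvanishing on $G^{\circ}$ meets the compact image in a set of Haar measure zero; the one sentence there worth sharpening is that the compact image is totally disconnected, so the correct justification is that every nonempty open subset of it is Zariski-dense in the connected group $G^{\circ}$, whence $f_c$ is not identically zero on any chart and the standard measure-zero statement for $\ell$-adic analytic functions applies. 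Both packages are at the level of \cite{SerreChebotarev} and \cite{F}, and your version is somewhat more robust (it adapts to other situations where non-ordinarity is detected by a trace congruence); the points you leave implicit --- existence of a good model so that the reductions are K3 surfaces with the expected Hodge polygon, and smooth proper base change identifying $\Fr_\VW$ with the Frobenius of the special fibre --- are routine and occupy Sections \ref{model} and \ref{compare} of the paper.
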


\begin{rem}
 The case of Kummer surfaces follows from the
result of Ogus  concerning the existence of ordinary reductions of abelian
surfaces. When the {\sl endomorphism field} $E$ of $X\times_K\C$ \cite[Th.
1.6]{Zarhin1983} is totally real (e.g., the Picard number is odd), the
assertion of Theorem \ref{main} was proven by Tankeev \cite{Tankeev}.
\end{rem}

{\sl Acknowledgements}. The first named author (F.B.) would like to thank the
Clay Institute for financial support and Centre Di Giorgi in Pisa for its
hospitality during the work on this paper. The second named author (Y.Z.) would
like to thank Courant Institute of Mathematical Sciences for its hospitality
during his several short visits in the years 2006--2009.

\vskip 1cm

After this paper had appeared on arXiv and was submitted, we received
 a letter from professor K. Joshi who brought to our attention his joint
preprint with C.Rajan,
http://arxiv.org/abs/math/0110070. Its Section 6 contained the proof of the
existence of a positive density set of places with ordinary reduction for any
$K3$-surface  over a number field. Some of their arguments (but not all) are
similar to the ones we used. Unfortunately, this result and its generalizations
did not appear in the printed version \cite{IMRN}.

\section{K3 surfaces over finite fields}
\label{finite}

 Let $k$ be a finite field of characteristic $p$, let  $\bar{k}$
be its algebraic closure, let  $Y$ be a K3 surface defined over $k$ and
$\bar{Y}=Y\times \bar{k}$.

\begin{sect}
\label{weil} Let $\ell$ be a prime different from $p$ and
$$P_2(Y,t)=1+\sum_{i=1}^{22} a_i t^i$$
the characteristic polynomial of the Frobenius endomorphism $\Fr$ of $\bar{Y}$
in the second $\ell$-adic cohomology group $H^2(\bar{Y},\Z_{\ell})$.
 It is known (P. Deligne \cite{Deligne};
Piatetskii--Shapiro and  I.R. Shafarevich \cite{PS}) that $P_2(Y,t)$ lies in
$1+t\Z[t]$ and does not depend on the choice of $\ell$; in addition, all
reciprocal roots of $P_2(Y,t)$ have (archimedean) absolute value $q=\#(k)$. It
is also known \cite[Cor. 1.10 on p. 63]{DI} that $\Fr$ acts on the
$\Q_{\ell}$-vector space
$$H^2(\bar{Y},\Q_{\ell})=H^2(\bar{Y},\Z_{\ell})\otimes_{\Z_{\ell}} \Q_{\ell}$$
as a semisimple (i.e., diagonalizable over an algebraic closure of $\Q_{\ell}$)
linear operator.

Let us split $P_2(Y,t)$ into a product of linear factors (over $\bar{\Q}$)
$$P_2(Y,t)=\prod_{i=1}^{22} (1-\alpha_i t)$$
where $\alpha_i$ are the reciprocal roots of $P_2(Y,t)$. Clearly,
$$a_1=-\sum_{i=1}^{22} \alpha_i.$$
Let $L=\Q(\alpha_1, \dots , \alpha_{22})$ be the splitting field of $P_2(Y,t)$:
it is a finite Galois extension of $Q$. We write $\OC_L$ for the ring of
integers in $L$. Clearly, all $\alpha_i \in \OC_L$. For each field embedding $L
\hookrightarrow \C$ the image of every $\alpha_i$ has absolute (archimedean)
value $q$. This implies that if $\alpha$ is one of the reciprocal roots then
its {\sl complex-conjugate} $\bar{\alpha}=q^2/\alpha$ is also one of the
reciprocal roots; in particular, $q^2/\alpha$ also lies in $\OC_L$. It follows
that if $\B$ is a maximal ideal in $\OC_L$ that does {\sl not} lie above $p$
then $\alpha$ is a $\B$-adic unit.
\end{sect}

\begin{sect}
\label{height} In order to describe the $p$-adic behavior of the reciprocal
roots, one has to use their crystalline interpretation and use a variant of
Katz's conjecture proven in \cite[Sect. 8]{BO}.  Recall \cite[Prop. 1.1 on p.
59]{DI} that
$$h^{0,2}(\bar{Y})=1, h^{1,1}(\bar{Y})=20$$
 and the crystalline cohomology groups of $\bar{Y}$ have no torsion. Combining
Theorem 8.39 on p. 8-47 of \cite{BO} and Example 2 on p. 659 of
\cite{MazurBAMS}, one concludes (\cite[Examples on pp. 90--91]{AM}, \cite{A})
that there exists a certain invariant $h=h(Y)$ of a K3 surface $Y$ called its
{\sl height} that enjoys the following properties.

The height $h$ is either a positive integer $\le 10$ or $\infty$. A K3 surface
is called ordinary if $h=1$ and supersingular if $h=\infty$. Let $\Pf$ be (any)
maximal ideal in $\OC_L$ that lies above $p$ and let
$$\ord_{\Pf}: L^{*} \to \Q$$
be the discrete valuation map attached to $\Pf$ and normalized by condition
$$\ord_{\Pf}(q)=1.$$
If $h=\infty$ then every $\ord_{\Pf}(\alpha)=1$. If $h \le 10$ then the
sequence
$$\ord_{\Pf}(\alpha_1), \dots , \ord_{\Pf}(\alpha_{22})$$
consists of  rational numbers $(h-1)/h, 1, (h+1)/h$: both numbers $(h-1)/h$ and
$(h+1)/h$ occur $h$ times in the sequence  while the number $1$ occurs
$(22-2h)$ times.
\end{sect}

\begin{rem}
\label{ord}
\begin{enumerate}
\item [(i)] Suppose that $h=\infty$. If $\alpha$ is one of the reciprocal roots
then $\alpha/q$ is a $\Pf$-adic unit for every $\Pf$ dividing $p$. It follows
that $\alpha/q$ is unit in $\OC_L$. On the other hand, since all archimedean
absolute values of $\alpha$ are equal to $q$, we conclude that all archimedean
absolute values of $\alpha/q$ are equal to $1$. By Kronecker's theorem,
$\alpha/q$ is a root of unity.

\item [(ii)]If $h\ne 1$ then all $\ord_{\Pf}(\alpha_i)$  are positive numbers
and therefore all the reciprocal roots lie in $\Pf$.

\item [(iii)]If $h=1$ then $(h-1)/h=0$ and therefore there is exactly one
reciprocal root $\alpha$ that does not lie in $\Pf$ and this root is simple.
\end{enumerate}
\end{rem}

\begin{lem}
\label{ordinary} $Y$ is ordinary if and only if $a_1$ is not divisible by $p$.
\end{lem}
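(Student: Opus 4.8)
The plan is to deduce the lemma directly from the $p$-adic description of the reciprocal roots in \S\ref{height} together with Remark \ref{ord}, by reducing the identity $a_1=-\sum_{i=1}^{22}\alpha_i$ modulo a maximal ideal $\Pf$ of $\OC_L$ lying above $p$. The only arithmetic input beyond what is already established is the elementary fact that for such $\Pf$ one has $\Pf\cap\Z=p\Z$, so that for a rational integer $n$ we have $p\mid n$ if and only if $n\in\Pf$. Fix such a $\Pf$ once and for all.

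First I would handle the ``only if'' direction in contrapositive form: assume $Y$ is not ordinary, i.e. $h=h(Y)\ge 2$, allowing $h=\infty$. If $h=\infty$ then every $\ord_{\Pf}(\alpha_i)=1>0$ by \S\ref{height}, and if $2\le h\le 10$ then every $\ord_{\Pf}(\alpha_i)>0$ by Remark \ref{ord}(ii); in either case all the $\alpha_i$ lie in $\Pf$. Since $\Pf$ is an ideal, $a_1=-\sum_{i=1}^{22}\alpha_i\in\Pf$. As $a_1\in\Z$ and $\Pf\cap\Z=p\Z$, this gives $p\mid a_1$.

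Next comes the ``if'' direction, again in contrapositive form: assume $Y$ is ordinary, i.e. $h=1$. By Remark \ref{ord}(iii) exactly one reciprocal root, say $\alpha$, fails to lie in $\Pf$ (it is simple and a $\Pf$-adic unit), while the remaining $21$ roots lie in $\Pf$. Reducing the identity $a_1=-\sum_{i=1}^{22}\alpha_i$ modulo $\Pf$ yields $a_1\equiv-\alpha\pmod{\Pf}$. If $a_1$ were in $\Pf$ we would deduce $\alpha\in\Pf$, a contradiction; hence $a_1\notin\Pf$, so $a_1\notin p\Z$, that is, $p\nmid a_1$. Combining the two implications gives the equivalence.

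I do not expect a genuine obstacle here: all the substantive content (the existence of the height and its $p$-adic behavior) is imported from \S\ref{height}, and what remains is bookkeeping — reducing modulo $\Pf$ and translating between ``lies in $\Pf$'' and ``divisible by $p$''. The only point requiring a moment's care is to work with a fixed prime $\Pf\mid p$ of the splitting field $L$ rather than of $\Q$, and to invoke $\Pf\cap\Z=p\Z$ in order to transfer the conclusion back to a statement about the rational integer $a_1$.
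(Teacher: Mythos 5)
Your proof is correct and follows exactly the paper's argument: reduce $a_1=-\sum_i\alpha_i$ modulo a prime $\Pf\mid p$ of $\OC_L$, use Remark \ref{ord}(ii)--(iii) to see that $a_1\notin\Pf$ precisely when $h=1$, and conclude via $\Z\cap\Pf=p\Z$. You have merely written out the two directions more explicitly than the paper does.
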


\begin{proof}
Since $a_1=-\sum_{i=1}^{22} \alpha_i$, It follows from Remark \ref{ord} that
$a_1$ does not lie in $\Pf$ if and only if $h=1$. Now one has only to recall
that $a_1 \in \Z$ and $\Z \bigcap \Pf =p\Z$.
\end{proof}

Additional information about K3 surfaces over finite fields could be found in
\cite{Nygaard, NOgus,ZarhinDuke,Yui}.

\section{A technical result}
\label{model} Let $K$ be a number field and $\OC_K$ its ring of integers. Let
$\bar{K}$ be an algebraic closure of $K$. Let $\Gal(K)=\Gal(\bar{K}/K)$ be the
absolute Galois group of $K$.
 Let $X$ be a K3 surface that is defined over a number field $K$ and let
 $\bar{X}$ be the K3 surface $X \times_K \bar{K}$ over $\bar{K}$.

 \begin{rem}
There exists a nowhere vanishing regular  exterior $2$-form on $\bar{X}$ that
is defined over $K$. Indeed, pick any regular nowhere vanishing $2$-form
$\bar{\omega}$ on $\bar{X}$. Then for each $\sigma \in \Gal(K)$ the $2$-form
$\sigma \bar{\omega}$ coincides with $c_{\sigma}\cdot \bar{\omega}$ for a
certain non-zero $c_{\sigma}\in \bar{K}^{*}$. We get a Galois cocycle $\sigma
\mapsto c_{\sigma}$. By Hilbert's Theorem 90, there exists $a \in \bar{K}^{*}$
such that
$$c_{\sigma}=\sigma(a)/a \quad \forall \ \sigma \in \Gal(K).$$
It follows that the $2$-form $\omega=a^{-1}\bar{\omega}$ is Galois-invariant.
As a corollary, we obtain that the canonical (invertible) sheaf
$\Omega^2_{X/K}$ is isomorphic to the structure sheaf $\OC_{X}$.
 \end{rem}

 If $S$ is a finite set of primes then let us consider the localization
 $\Lambda=\Lambda_S:=\OC_K[S^{-1}]$  of $\OC_K$ with respect to $S$.
 Clearly, $\Lambda$ is a Dedekind ring,
$$\OC_K\subset \Lambda \subset K$$
and $\Spec(\OC_K)\setminus \Spec(\Lambda)$ is a finite set of maximal ideals in
$\OC_K$, whose residual characteristic lies in $S$.

\begin{sect}
 There exists a a finite set of primes $S$ and a smooth projective morphism $\X \to \Spec(\Lambda)$ that
 enjoy the following properties:

 \begin{itemize}
\item The generic fiber $\X_K$ coincides with $X$.

\item The invertible (canonical) sheaf $\Omega^2_{\X/\Lambda}$ is isomorphic to
the structure sheaf $\OC_{\X}$.

\item The cohomology group $H^1(\X, \OC_{\X})$ is a free $\Lambda$-module of
finite rank.

\item For every commutative $\Lambda$-algebra $B$
 the canonical map
$$H^1(\X, \OC_{\X})\otimes_{\Lambda} B \to H^1(\X_B, \OC_{\X_B})$$
is an isomorphism. Here
$$\X_B=\X\times_{Spec(\Lambda)} \Spec(B).$$
Since $H^1(\bar{X},\OC_{\bar{X}})=\{0\}$, we conclude that $H^1(\X,
\OC_{\X})=\{0\}$ and therefore $H^1(X_s,\OC_{X_s})=\{0\}$
  for any geometric point $s$ of
$\Spec(\Lambda)$.  In particular, $\X_s$ is a K3 surface.
 \end{itemize}
\end{sect}

The assertion follows from general results about the existence of smooth
projective models \cite[pp. 157--158, Prop. A.9.1.6]{HS} and base change
theorems for Hodge cohomology \cite[Sect. 8, pp. 203--205]{Katz} (see also
\cite[Sect. 5]{Mumford}).

 We call such schemes $\X \to \Spec(\Lambda)$ {\sl good modeles} of $X$.

 \section{Ordinary reductions}
 Let $X$ be a K3 surface over a number field $K$.
 Let us pick a prime $\ell>2 \cdot 22$ and consider the corresponding $22$-dimensional $\ell$-adic
 representation \cite{Serre1972,SerreKyoto}

 $$\rho_{2,X}: \Gal(K) \to \Aut_{\Z_{\ell}}(H^2(\bar{X}, \Z_{\ell}))\subset
 \Aut_{\Q_{\ell}}(H^2(\bar{X}, \Q_{\ell})).$$

 We write $G_{\ell,X,K}$ for the image $\rho_{2,X}(\Gal(K))$: it is a closed
 compact subgroup of $\Aut_{\Z_{\ell}}(H^2(\bar{X}, \Z_{\ell}))$; in particular,
 it is an $\ell$-adic Lie subgroup of $\Aut_{\Q_{\ell}}(H^2(\bar{X},
 \Q_{\ell}))$. One may view $G_{\ell,X,K}$ as the Galois group of the infinite Galois
 extension $\bar{K}^{\ker(\rho_{2,X})}/K$ where $\bar{K}^{\ker(\rho_{2,X})}$
 is the subfield of $\ker(\rho_{2,X})$-invariants in $\bar{K}$.

 We write $\I$ for the identity automorphism of $H^2(\bar{X},
 \Z_{\ell})$. Then the set
 $$\ZZ:=\{c\cdot \I\mid c \in \Z_{\ell}^*\}\bigcap G_{\ell,X,K}$$
 is a closed normal $\ell$-adic Lie subgroup in $G_{\ell,X}$.

 \begin{lem}
 \label{thin}
 The subgroup $\ZZ$ is not open in $G_{\ell,X,K}$. In particular,
 $\dim(\ZZ) < \dim(G_{\ell,X})$.
 \end{lem}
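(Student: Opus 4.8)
The plan is to argue that if $\ZZ$ were open in $G_{\ell,X,K}$, then the Galois representation $\rho_{2,X}$ would, after a finite extension of $K$, become scalar-valued, and this would contradict the known structure of $H^2$ of a K3 surface --- most directly, the fact that the primitive part $P^2(\bar X,\Q_\ell)$ carries a Galois-invariant non-degenerate symmetric pairing (cup product) of dimension $21$, together with the existence of algebraic cycle classes (e.g.\ the class of an ample line bundle) which are fixed by an open subgroup but on which no element acts by a non-trivial scalar. First I would recall that $\ZZ$ open in $G_{\ell,X,K}$ forces $\dim \ZZ = \dim G_{\ell,X,K}$, so it suffices to rule out the case that $G_{\ell,X,K}$ has an open subgroup consisting of scalars. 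Passing to a finite extension $K'/K$ corresponding to such an open subgroup, we would get that $\Gal(K')$ acts on $H^2(\bar X,\Z_\ell)$ through scalars $c\colon\Gal(K')\to\Z_\ell^\ast$.

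The key step is then to derive a contradiction from this. I would use two incompatible constraints on the scalar character $c$. On one hand, the cup-product pairing $H^2\times H^2\to H^4(\bar X,\Z_\ell)\cong\Z_\ell(-2)$ is Galois-equivariant, and $\Gal(K')$ acts on $\Z_\ell(-2)$ by the square of the cyclotomic character $\chi_{\mathrm{cyc}}^2$; since the left side transforms by $c^2$, we get $c^2=\chi_{\mathrm{cyc}}^2$ as characters of $\Gal(K')$, so $c=\pm\chi_{\mathrm{cyc}}$. On the other hand, the cycle class of an ample divisor $H$ on $X$ (which is defined over some finite extension, so after enlarging $K'$ we may assume it is $\Gal(K')$-fixed) gives a non-zero vector in $H^2(\bar X,\Q_\ell)$ on which $\Gal(K')$ acts trivially, i.e.\ $c\equiv 1$. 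These are incompatible because $\chi_{\mathrm{cyc}}$ has infinite order on $\Gal(K')$ for any number field $K'$ (ramification at $\ell$ is unbounded), so $\pm\chi_{\mathrm{cyc}}\neq 1$. This contradiction shows $\ZZ$ cannot be open.

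The main obstacle --- really the only subtle point --- is making sure one genuinely has a Galois-fixed line in $H^2(\bar X,\Q_\ell)$ that is \emph{not} the full span on which the scalar could act; here the divisor class does the job since its self-intersection is nonzero, so it is not isotropic and the pairing argument pins down $c=\pm\chi_{\mathrm{cyc}}$ independently. One should also be slightly careful that the field $K'$ may need to be enlarged finitely many times (to split the scalar subgroup, to trivialize the sign ambiguity, and to define $H$), but a finite extension does not change whether $\ZZ$ is open in $G_{\ell,X,K}$ up to finite index, and the dimension statement $\dim\ZZ<\dim G_{\ell,X,K}$ is insensitive to such extensions. Finally, the last sentence of the lemma is immediate: a closed subgroup of an $\ell$-adic Lie group is open if and only if it has the same dimension, so failure of openness gives the strict inequality of dimensions.
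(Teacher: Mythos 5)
There is a genuine gap, and it sits exactly at the point you flag as ``the only subtle point.'' The cycle class of an ample divisor $H$ defined over $K'$ is \emph{not} a Galois-fixed vector in the untwisted cohomology $H^2(\bar X,\Q_\ell)$: the cycle class map $\mathrm{Pic}(\bar X)\to H^2(\bar X,\Q_\ell(1))$ is what is Galois-equivariant, so the class of $H$ is fixed only in the Tate twist $H^2(\bar X,\Q_\ell)(1)$. Untwisting, $\Gal(K')$ acts on the line spanned by the divisor class in $H^2(\bar X,\Q_\ell)$ through the inverse cyclotomic character $\chi^{-1}$, not trivially. (This is forced by the Weil conjectures: at a place of good reduction the Frobenius eigenvalue on a divisor class in $H^2$ is $q$, not $1$.) Consequently your two constraints read $c=\pm\chi^{-1}$ (from cup product into $H^4\cong\Q_\ell(-2)$) and $c=\chi^{-1}$ (from the divisor class), which are perfectly compatible; no contradiction results. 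The dangerous case --- $\Gal(K')$ acting on all of $H^2(\bar X,\Q_\ell)$ by $\chi^{-1}$, equivalently trivially on $H^2(\bar X,\Q_\ell)(1)$ --- is precisely the one your argument fails to exclude, and it cannot be excluded by the cup product and a single algebraic class alone.

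The paper closes this case with a nontrivial arithmetic input: the Tate conjecture for K3 surfaces over number fields (a consequence of Faltings's theorems), which identifies $H^2(\bar X,\Q_\ell)(1)^{\Gal(K')}$ with $\NS(\bar X)^{\Gal(K')}\otimes\Q_\ell$. A scalar action forces the invariants in $H^2(1)$ to be all or nothing; ``all'' is impossible because $\mathrm{rank}\,\NS(\bar X)\le 20<22$, and ``nothing'' is impossible because the hyperplane class is a nonzero invariant. Your proof needs some substitute for this step --- e.g.\ the Tate conjecture as above, or a $p$-adic Hodge theory argument using that the Hodge--Tate weights of $H^2$ of a K3 surface are not all equal ($h^{0,2}=h^{2,0}=1$, $h^{1,1}=20$), so the representation cannot be an unramified twist of a power of the cyclotomic character. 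As written, the argument does not prove the lemma. (Your final remark, that a closed subgroup of an $\ell$-adic Lie group of full dimension is open, is correct and matches the paper's intent for the last assertion.)
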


 \begin{proof}
Suppose that $\ZZ$ is  open in $G_{\ell,X}$. Then it has finite index and there
exists a finite Galois extension $K^{\prime}/K$ such that
$$G_{\ell,X,K^{\prime}}=\rho_{2,X}(\Gal(K^{\prime}))=\ZZ,$$ i.e.,
$\Gal(K^{\prime})$ acts on $H^2(\bar{X}, \Q_{\ell})$ via scalars. It follows
that $\Gal(K^{\prime})$ acts on the {\sl twisted} $\ell$-adic cohomology group
$H^2(\bar{X}, \Q_{\ell})(1)$ also via scalars. This implies that $H^2(\bar{X},
\Q_{\ell})(1)^{\Gal(K^{\prime})}$  is either $H^2(\bar{X}, \Q_{\ell})(1)$ or
zero. However, it is known \cite[Th. 5.6 on p. 80]{Tate} that
$$H^2(\bar{X},
\Q_{\ell})(1)^{\Gal(K^{\prime})}=\NS(\bar{X})^{\Gal(K^{\prime})}\otimes
\Q_{\ell}.$$ (This assertion is the Tate conjecture for K3 surfaces that
follows from the corresponding results of Faltings  concerning abelian
varieties \cite{F}.)

Since the N\'eron--Severi group $\NS(\bar{X})$ of $\bar{X}$ is a  (non-zero)
free commutative group of rank $\le 20<22$, we conclude that $$H^2(\bar{X},
\Q_{\ell})(1)^{\Gal(K^{\prime})}\ne H^2(\bar{X}, \Q_{\ell})(1)$$ and therefore
$$H^2(\bar{X}, \Q_{\ell})(1)^{\Gal(K^{\prime})}=\{0\}.$$
However, this is not the case, because there is a hyperplane section of $X$
that is defined over $K^{\prime}$ (and even over $K$) and its $\ell$-adic
cohomology class is Galois-invariant and not zero. The obtained contradiction
proves the Lemma.
 \end{proof}

 \begin{rems}
Combining results of \cite{Deligne} and \cite{Bogomolov,BogomolovI}, one may
prove that $\dim(\ZZ)=1$.

\noindent We refer to \cite{SZ} for other applications of the Tate conjecture
\cite{Tate} and its variants to arithmetic of K3 surfaces over number fields.

 \end{rems}

The following statement and its proof are inspired by results of N. Katz and A.
Ogus \cite[Prop. 2.7.2  on p. 371]{Ogus}.

\begin{lem}
\label{ogus2}

Suppose that a prime $p$ and an element $u \in G_{\ell,X,K}$ enjoy the
following properties:

\begin{itemize}
\item[(i)]

$p-1$ is divisible by $\ell$.

\item[(ii)]

$u \in \I+ \ell \cdot\End_{\Z_{\ell}}(H^2(\bar{X}, \Z_{\ell}))$.

\item[(iii)]

The characteristic polynomial
$$P_u(t)=\det(1-tu, H^2(\bar{X}, \Q_{\ell}))=1+b_1 t + \cdots + b_{22} t^{22}$$ lies
in $\Z[t]$.

\item [(iv)]Let us split $P_u(t)$ into a product of linear factors
$$P_u(t)=\prod_{i=1}^{22} (1-\beta_i t).$$
Then all the  reciprocal roots $\beta_1, \dots , \beta_{22}$ of $P_u(t)$ have
the same  archimedean absolute value $p$.

\item [(v)] $b_1$ is divisible by $p$.
\end{itemize}

Then $p^{-1} u$ is a unipotent linear operator in $H^2(\bar{X}, \Q_{\ell})$. In
particular, if $u$ is semisimple then $u=p\cdot\I$.
\end{lem}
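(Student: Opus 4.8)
The plan is to show that all reciprocal roots $\beta_i$ of $P_u(t)$ are equal to $p$. This suffices: it says precisely that the reciprocal roots $\beta_i/p$ of the characteristic polynomial of $p^{-1}u$ are all equal to $1$, i.e.\ that $p^{-1}u$ is unipotent; and if $u$ (hence $p^{-1}u$) is semisimple, a semisimple unipotent operator is the identity, so $u=p\cdot\I$. The whole argument is a comparison of an archimedean size estimate with an $\ell$-adic congruence, in the spirit of the Katz--Ogus counting trick; in particular the Poincar\'e-duality (similitude) structure of $u$, which would give a functional equation for $P_u(t)$, plays no role.

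First I would unpack hypothesis (ii). Put $R(t):=\det(t\I-u)=t^{22}P_u(1/t)=\prod_{i=1}^{22}(t-\beta_i)$. Since $u\equiv\I$ modulo $\ell\cdot\End_{\Z_\ell}(H^2(\bar X,\Z_\ell))$, we have $t\I-u\equiv(t-1)\I$ entrywise modulo $\ell$, so $R(t)\equiv(t-1)^{22}$ modulo $\ell$. By (iii) the polynomial $R(t)$ is monic with coefficients in $\Z$, so its roots $\beta_1,\dots,\beta_{22}$ are algebraic integers, and the congruence holds in $\Z[t]$ modulo $\ell\Z[t]$. Fix an embedding $\bar\Q\hookrightarrow\bar\Q_\ell$ and let $\mathfrak l$ be the induced prime of $\bar\Q$ over $\ell$. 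Reducing $R$ modulo $\mathfrak l$ and using unique factorization over the residue field, each $\beta_i$ reduces to a root of $(t-1)^{22}$, i.e.\ $\beta_i\equiv 1\pmod{\mathfrak l}$. Since $\ell\mid p-1$ and $\ell>44$ force $p\ne\ell$, whereas $p\equiv 1\pmod\ell$ by (i), the rational integer $p$ is an $\mathfrak l$-adic unit congruent to $1$; hence $\gamma_i:=\beta_i/p$ is an $\mathfrak l$-adic integer with $\gamma_i\equiv 1\pmod{\mathfrak l}$ for every $i$.

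Next I would examine $m:=\gamma_1+\dots+\gamma_{22}=-b_1/p$. By (iii) and (v) this is a rational integer, and by the previous step $m\equiv 22\pmod{\mathfrak l}$, hence (intersecting with $\Z$) $m\equiv 22\pmod\ell$. On the other hand, by (iv) every $\gamma_i$ has archimedean absolute value $1$, so $|m|\le 22$. Since $\ell>2\cdot 22$, the only integer in the interval $[-22,22]$ that is congruent to $22$ modulo $\ell$ is $22$ itself, so $m=22$. Thus $\gamma_1+\dots+\gamma_{22}=22$ while $|\gamma_i|=1$ for all $i$; the equality case of the triangle inequality (in any complex embedding) forces $\gamma_i=1$ for every $i$, i.e.\ $\beta_i=p$ for all $i$, which is what we wanted.

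I do not anticipate a serious obstacle; the only care needed is the bookkeeping that makes the comparison legitimate — that the $\beta_i$ are honest algebraic integers, so that the congruence inherited from $\bar\Q_\ell$ is a congruence of algebraic numbers and $m$ is literally the same integer in the $\ell$-adic and archimedean pictures, and that $p$ is prime to $\ell$, so that passing from $\beta_i$ to $\gamma_i$ preserves both integrality and the congruence. The full strength of (iv), for all $22$ roots, is used to get $|m|\le 22$, and the choice $\ell>2\cdot 22$ is exactly what is needed to recover $m$ from its size together with its residue modulo $\ell$.
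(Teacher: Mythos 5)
Your proof is correct and follows essentially the same route as the paper: compare the $\ell$-adic congruence $\beta_i\equiv 1$ (forcing $-b_1/p\equiv 22\pmod\ell$) with the archimedean bound $|-b_1/p|\le 22$ to get $-b_1=22p$, then use equality in the triangle inequality to conclude all $\beta_i=p$. The only cosmetic difference is that you obtain the congruence on the roots by reducing the characteristic polynomial modulo a prime above $\ell$, where the paper directly asserts that the $(\beta_i-1)/\ell$ are algebraic integers.
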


\begin{proof}
So, $-b_1=pc$ for some integer $c$. Notice that $\beta_1, \dots , \beta_{22}$
are the eigenvalues of $u$ and $-b_1 =\sum_{i=1}^{22}\beta_i$ is the trace of
$u$.

The congruence condition for $u$ implies that all $(\beta_i-1)/\ell$ are
algebraic integers and therefore the integer $-b_1=pc$ is congruent to $22$
modulo $\ell$. Since $p-1$ is divisible by $\ell$, it follows that $c$ is
congruent to $22$ modulo $\ell$.

It is also clear that the absolute value of $(-b_1)$ does not exceed $22\cdot
p$ and therefore $\mid c \mid \le 22$. Taking into account that $\ell>2 \cdot
22$ and $c$ is congruent to $22$ modulo $\ell$, we conclude that $c=22$, i.e.,
$-b_1 =22\cdot p$. Since $-b_1$ is a sum of $22$ complex numbers $\beta_1,
\dots , \beta_{22}$ of absolute value $p$, it follows that all $\beta_i=p$,
i.e., all eigenvalues of $u$ are equal to $p$, which means that $p^{-1} u$ is
unipotent.
\end{proof}

\begin{sect}
\label{compare}
 Choose a good model $\X \to \Spec(\Lambda)$  of $X$ (as in Sect. \ref{model}). Let
 $\VW \in \Spec(\Lambda)$ be a closed point, whose residual characteristic $p=p(\VW)$ is different from
 $\ell$. Then $\rho_{2,X}$ is unramified at $\VW$ and one may associate to $\VW$
 a Frobenius element $\Fr_{\VW} \in G_{\ell,X,K}$ (\cite{Serre1972, SerreKyoto}, \cite[Sect. 4]{Zarhin})
  that is defined up to conjugacy.
 Let us consider the corresponding closed fiber $\X(\VW)$, which  is a K3
 surface over the (finite) residue field $k(\VW)$. Let $\overline{k(\VW)}$ be an algebraic
 closure of $k(\VW)$ and $\overline{\X(\VW)}=\X(\VW)\times_
 {k(\VW)}\overline{k(\VW)}$. The Frobenius endomorphism
 $\Fr:\overline{\X(\VW)} \to \overline{\X(\VW)}$ acts on $H^2(\overline{\X(\VW)},
 \Q_{\ell}))$ and there exists an isomorphism of $\Q_{\ell}$-vector spaces
 $$H^2(\overline{\X(\VW)},
 \Q_{\ell}))\cong  H^2(\bar{X}, \Q_{\ell})$$
 such that the action of $\Fr$ becomes the action of $\Fr_{\VW}^{-1}$ (\cite{Serre1972, SerreKyoto}, \cite[Sect. 4]{Zarhin});
 in particular, we have the coincidence of the corresponding characteristic
 polynomials, i.e.,
$$P_2(X(\VW),t)=\det(1-t \Fr,H^2(\overline{\X(\VW)},
 \Q_{\ell})))=\det(1-t \Fr_{\VW}^{-1}, H^2(\bar{X}, \Q_{\ell})).$$
 In particular, the reciprocal roots of $P_2(X(\VW),t)$ are exactly the
 eigenvalues of $\Fr_{\VW}^{-1}$; in addition, it follows from the semisimplicity
 of the Frobenius endomorphism (Subsect. \ref{weil}) that $\Fr_{\VW}^{-1}$ and (therefore) $\Fr_{\VW}$ are semisimple
 linear operators in $H^2(\bar{X}, \Q_{\ell})$. So, if
$$P_2(X(\VW),t)=1+\sum_{i=1}^{22} a_i(\VW) t^i\in \Z[t]$$
then the integer $-a_1(\VW)$ coincides with the trace  of $\Fr_{\VW}^{-1}$ in
$H^2(\bar{X}, \Q_{\ell})$. It follows from Lemma \ref{ordinary} that $X(\VW)$
is ordinary if and only if $a_1(\VW)$ is {\sl not} divisible by $p(\VW)$.
\end{sect}

\begin{sect}
\label{ogusserre}
 Suppose that
$$G_{\ell,X,K}\subset  \I+ \ell \End_{\Z_{\ell}}(H^2(\bar{X}, \Z_{\ell})),$$
 $K$ contains a primitive $\ell$th root of unity. Suppose also that the residue field
$k(\VW)$ is a prime (finite) field $\F_p$ of characteristic $p=p(\VW)$. Then
$p-1$ is divisible by $\ell$.

Let us assume that $a_1(\VW)$ is divisible by $p$.  Using  the results of
 Subsect. \ref{weil} and  \ref{compare}, we may apply Lemma
\ref{ogus2}  to $u=\Fr_{\VW}^{-1}$ and conclude that $\Fr_{\VW}^{-1}=p\cdot\I$,
i.e., $\Fr_{\VW}=p^{-1}\cdot \I\in \ZZ$.

This proves that if (instead) we assume that
 $\Fr_{\VW}$ does {\sl not} belong to $\ZZ$ then $a_1(\VW)$ is {\sl not} divisible by $p$
and, thanks to the last assertion of Subsect. \ref{compare},
 $X(\VW)$ is
ordinary.

 It is well-known \cite[Theorems 1.112 and 1.113 on p. 83]{Koch} that the set of $\VW$'s
with prime residue fields has density one. On the other hand, a result of Serre
\cite[Sect. 4.1, Cor. 1]{SerreChebotarev} applied to $G=G_{\ell,X,K}$ and
$C=\ZZ$ and  combined with Lemma \ref{thin} implies that the set of $\VW$'s
with $\Fr_{\VW}\in \ZZ$ has density zero. It follows that (under our
assumptions on $K$) the set of $\VW$'s with ordinary reduction $X(\VW)$ has
density one.
\end{sect}

{\bf Proof of Theorem \ref{main}}. So, $K$ is an arbitrary number field. There
exists a finite Galois extension $L/K$ such that $L$ contains a primitive
$\ell$th root of unity and
$$G_{\ell,X,L}\subset  \I+ \ell \cdot\End_{\Z_{\ell}}(H^2(\bar{X}, \Z_{\ell})).$$
Now the result follows from the last assertion of Subsect. \ref{ogusserre}
applied to $L$ (instead of $K$.)

\end{document}